\newcommand{\odip}[2]{o _{#1}\!\left(#2\right)\mathchoice{\!}{}{}{}}
\newcommand{\odi}[1]{\odip{}{#1}}
\newcommand{\Odip}[2]{\mathcal{O}_{\,#1}\left(#2\right)}
\newcommand{\Odipm}[2]{\mathcal{O}_{#1} (#2)}
\newcommand{\Odi}[1]{\mathcal{O}\left(#1\right)}
\newcommand{\dx}{\mathrm{d}}
\newcommand{\e}{\mathrm{e}}
\newcommand{\ii}{\mathrm{i}}
\newcommand{\E}{\widetilde{\mathcal{E}}}
\newcommand{\C}{\mathcal{C}}
\newcommand{\eps}{\varepsilon}
\newcommand{\K}{\mathbf{k}}
\newcommand{\Stilde}{\widetilde{S}}
\newcommand{\gA}{\mathfrak{A}}
\newcommand{\gB}{\mathfrak{B}}
\newcommand{\gC}{\mathfrak{C}}
\newtheorem{Theorem}{Theorem}[section]
\newtheorem{Lemma}{Lemma}[section]
\title{On an average ternary problem with prime powers}
\author[M.~Cantarini, A.~Gambini, A.~Languasco, A.~Zaccagnini]
       {Marco Cantarini, Alessandro Gambini, \\
        Alessandro Languasco, Alessandro Zaccagnini}
\date{\today}
\subjclass[2010]{Primary 11P32. Secondary 11P55, 11P05}
\keywords{Waring-Goldbach problem; Hardy-Littlewood method}
\begin{document}

\begin{abstract}
We continue our work on averages for ternary additive problems with
powers of prime numbers in \cite{LanguascoZ2016a},
\cite{LanguascoZ2017e}, and \cite{CantariniGZ2018}.
\end{abstract}

\maketitle

\section{Introduction}

The problem of representing a large integer $n$, satisfying suitable
congruence conditions, as a sum of a prescribed number of powers
of primes, say $n = p_1^{k_1} + \cdots + p_s^{k_s}$, is classical.
Here $k_1$, \dots, $k_s$ denote fixed positive integers.
This class of problems includes both the binary and ternary Goldbach
problem, and Hua's problem.
If the \emph{density} $\rho = k_1^{-1} + \cdots + k_s^{-1}$ is large
and $s \ge 3$, it is often possible to give an asymptotic formula for
the number of different representations the integer $n$ has.
When the density $\rho$ is comparatively small, the individual problem
is usually intractable and it is reasonable to turn to the easier task
of studying the average number of representations, if possible
considering only integers $n$ belonging to a \emph{short} interval
$[N, N + H]$, say, where $H \ge 1$ is ``small.''

Here we study ternary problems: let $\K = (k_1, k_2, k_3)$ where
$k_1$, $k_2$ and $k_3$ are integers with $2 \le k_1 \le k_2 \le k_3$.
Our goal is to compute the average number of representations of a
positive integer $n$ as $p_1^{k_1} + p_2^{k_2} + p_3^{k_3}$, where
$p_1$, $p_2$ and $p_3$ are prime numbers (or powers of primes).
Let
\begin{equation}
\label{Rk-def}
  R(n; \K)
  =
  \sum_{n = m_1^{k_1} + m_2^{k_2} + m_3^{k_3}}
    \Lambda(m_1) \Lambda(m_2) \Lambda(m_3),
\end{equation}
where $\Lambda$ is the von Mangoldt function, that is,
$\Lambda(p^m) = \log(p)$ if $p$ is a prime number and $m$ is a
positive integer, and $\Lambda(n) = 0$ for all other integers.
For brevity, we write $\rho = k_1^{-1} + k_2^{-1} + k_3^{-1}$ for
the density of this problem.
It will also shorten our formulae somewhat to write
$\gamma_k = \Gamma(1 + 1 / k)$ for any real $k > 0$, where $\Gamma$ is
the Euler Gamma-function.

\begin{Theorem}
\label{ternary}
Let $\K = (k_1, k_2, k_3)$ where $2 \le k_1 \le k_2 \le k_3$ be a
triple of integers.
For every $\eps > 0$ there exists a constant $C = C(\eps) > 0$,
independent of $\K$, such that
\[
  \sum_{n = N + 1}^{N + H} R(n; \K)
  =
  \frac{\gamma_{k_1} \gamma_{k_2} \gamma_{k_3}}{\Gamma(\rho)}
  H N^{\rho - 1}
  +
  \Odip{\K}{H N^{\rho - 1}
    \exp\Bigl\{ -C \Bigl(\frac{\log N}{\log\log N}\Bigr)^{1/3} \Bigr\}}
\]
as $N \to +\infty$, uniformly for
$N^{1 - 5 / (6 k_3) + \eps} < H < N^{1 - \eps}$.
\end{Theorem}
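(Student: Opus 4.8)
\smallskip
\noindent The plan is to attack Theorem~\ref{ternary} by the Hardy--Littlewood circle method, isolating the short interval through the exponential sum $U(\alpha)=\sum_{N<n\le N+H}e(n\alpha)$, which satisfies $|U(\alpha)|\le\min(H,\|\alpha\|^{-1})$; here $e(x)=e^{2\pi\ii x}$ and $\|\alpha\|$ denotes the distance from $\alpha$ to the nearest integer. For $k\ge2$ set
\[
  S_k(\alpha)=\sum_{m\le(2N)^{1/k}}\Lambda(m)\,e(m^k\alpha),\qquad
  \Stilde_k(\alpha)=\sum_{\nu\le 2N}\frac{1}{k}\,\nu^{1/k-1}\,e(\nu\alpha),
\]
the second sum being the ``expected'' generating function, with smooth coefficients carrying no arithmetic content. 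By orthogonality
\[
  \sum_{n=N+1}^{N+H}R(n;\K)=\int_{-1/2}^{1/2}S_{k_1}(\alpha)S_{k_2}(\alpha)S_{k_3}(\alpha)\,\overline{U(\alpha)}\,\dx\alpha,
\]
and I would split the integrand through $\prod_iS_{k_i}=\prod_i\Stilde_{k_i}+\bigl(\prod_iS_{k_i}-\prod_i\Stilde_{k_i}\bigr)$, thus obtaining a main integral and an error integral.

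For the main integral, orthogonality turns $\int_{-1/2}^{1/2}\Stilde_{k_1}\Stilde_{k_2}\Stilde_{k_3}\,\overline{U}\,\dx\alpha$ into $\sum_{N<n\le N+H}c(n)$, where $c(n)=(k_1k_2k_3)^{-1}\sum_{\nu_1+\nu_2+\nu_3=n}\nu_1^{1/k_1-1}\nu_2^{1/k_2-1}\nu_3^{1/k_3-1}$. Comparing the inner sum with the Dirichlet integral over the simplex $\{x_i\ge0,\ x_1+x_2+x_3=n\}$ --- a two-dimensional Euler--Maclaurin estimate, with care near the faces where the summand is singular but integrable --- yields $c(n)=\frac{\gamma_{k_1}\gamma_{k_2}\gamma_{k_3}}{\Gamma(\rho)}\,n^{\rho-1}+\Odip{\K}{n^{\rho-1-\delta}}$ for some $\delta=\delta(\K)>0$; summing over $n$ and invoking $H<N^{1-\eps}$ gives precisely the main term of the theorem with an admissible error $\Odip{\K}{HN^{\rho-1-\delta/2}}$. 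This is in essence the computation already performed in \cite{LanguascoZ2016a}, \cite{LanguascoZ2017e} and \cite{CantariniGZ2018}.

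The heart of the argument is the error integral. Telescoping,
\[
  \prod_iS_{k_i}-\prod_i\Stilde_{k_i}
  =(S_{k_1}-\Stilde_{k_1})S_{k_2}S_{k_3}
  +\Stilde_{k_1}(S_{k_2}-\Stilde_{k_2})S_{k_3}
  +\Stilde_{k_1}\Stilde_{k_2}(S_{k_3}-\Stilde_{k_3}),
\]
so the error is a sum of three integrals, each carrying exactly one difference $S_{k_j}-\Stilde_{k_j}$. The decisive input is a weighted mean-square estimate of the shape
\[
  \int_{-1/2}^{1/2}\bigl|S_k(\alpha)-\Stilde_k(\alpha)\bigr|^2\min\bigl(H,\|\alpha\|^{-1}\bigr)\,\dx\alpha
  \ll_k HN^{2/k-1}\exp\!\Bigl\{-C\Bigl(\tfrac{\log N}{\log\log N}\Bigr)^{1/3}\Bigr\},
\]
valid precisely when $N^{1-5/(6k)+\eps}\le H\le N^{1-\eps}$. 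By Gallagher's lemma the left-hand side is governed by the mean square of $\psi(y+\vartheta)-\psi(y)-\vartheta$ over $y\le(2N)^{1/k}$ with $\vartheta$ of order $(H/N)y$; the hypothesis $H\ge N^{1-5/(6k)+\eps}$ is exactly what forces $\vartheta\gg y^{1/6+\eps}$, the classical threshold beyond which, via a zero-density estimate for $\zeta$, this mean square is nontrivial, whereas the gain $\exp\{-C(\log N/\log\log N)^{1/3}\}$ is the one delivered by the Vinogradov--Korobov zero-free region in the explicit formula. Granting this, I would bound each of the three error integrals by Cauchy--Schwarz, isolating $|S_{k_j}-\Stilde_{k_j}|^2$ (with a share of the weight $|U|$) against the remaining factors; the companion integrals --- $\int|\Stilde_{k_1}|^2|\Stilde_{k_2}|^2|U|$, $\int|S_{k_j}|^2|U|$, and fourth-moment-type integrals such as $\int|S_{k_1}|^2|S_{k_3}|^2|U|$ --- are estimated by the pointwise bound $|\Stilde_k(\alpha)|\ll\min(N^{1/k},\|\alpha\|^{-1/k})$ and by counting solutions of $a^{k}-b^{k}=m$ in a dyadic range (a divisor argument), at a cost of only powers of $\log N$. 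Near the ``wide'' major arc $\|\alpha\|\le1/H$ these estimates combine directly; on its complement one exploits instead the faster decay $|\Stilde_{k_1}\Stilde_{k_2}\Stilde_{k_3}(\alpha)|\ll\|\alpha\|^{-\rho}$ together with a minor-arc bound for $S_{k_3}$ (Weyl's inequality, or Vinogradov's mean value theorem when $k_3$ is large), so that no loss of the shape $(N/H)^{c}$ survives.

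The step I expect to be the main obstacle is exactly this last one: controlling all three error integrals \emph{uniformly in} $\K$ down to $H$ of size $N^{1-5/(6k_3)+\eps}$. The binding term is the one carrying $S_{k_3}-\Stilde_{k_3}$ --- the largest exponent, hence the weakest short-interval information and the poorest minor-arc bound --- and the delicate point is to apportion the decay of $U$ among the factors (and, when the densities $1/k_i$ are small, to split the range of integration rather than apply a single Cauchy--Schwarz) so that the exponential gain from the mean-square estimate is not overwhelmed by a power of $N/H$ coming from the $L^2$-averages of the auxiliary sums. Once this is in place, combining the three bounds with the main-term computation yields the asymptotic formula with the stated error term, uniformly in the stated range of $H$.
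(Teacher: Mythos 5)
Your outline follows the general circle-method strategy of the paper, but with genuinely different building blocks (sharp cutoffs and a telescoping decomposition with exactly one difference per term, versus the paper's exponentially weighted sums $\Stilde_k$, the full trilinear expansion \eqref{identity}, and a main-term extraction via Lemma~\ref{Laplace-formula}), and the difference matters: your ``decisive input'' has a genuine gap. The weighted mean-square estimate with exponential saving that you posit over the \emph{whole} interval $[-1/2,1/2]$ is not a known result and does not follow from the mechanism you invoke. The Gallagher/zero-density/zero-free-region argument yields the saving $A(N;-c_1)=\exp\{-c_1(\log N/\log\log N)^{1/3}\}$ for $\int_{-\xi}^{\xi}|\E_k|^2$ only in the range $\xi< N^{-1+5/(6k)-\eps}$ (this is exactly Lemma~\ref{LP-Lemma-gen}); it says nothing about the complement, where no exponential saving is known unconditionally. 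The paper's proof is built around precisely this limitation: it shrinks the major arc to $[-B/H,B/H]$ with $B=N^{2\eps}$ so as to stay inside the admissible range of the $L^2$ lemma, and on the complement $\C$ it abandons the decomposition altogether, bounding $\int_{\C}|\Stilde_{k_1}\Stilde_{k_2}\Stilde_{k_3}U|$ by one sup-norm and two Tolev-type weighted $L^2$ norms of the \emph{full} sums (Lemma~\ref{weighted-L2}), winning only the factor $B^{-1}=N^{-2\eps}$ --- which is the sole reason the theorem's range of $H$ carries the extra $N^{\eps}$. Your plan needs an explicit substitute for this step; the fallback you suggest (Weyl or Vinogradov minor-arc bounds for $S_{k_3}$) forces a full dissection at all rationals of small denominator and, for prime-variable Weyl sums of large degree $k_3$, delivers savings so weak that the claimed uniformity of $C$ in $\K$ becomes doubtful. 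As stated, the tail of your master estimate is unproven and is the crux of the unconditional theorem.

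Two further points. First, the claim that the fourth-moment companion integrals such as $\int|S_{k_2}|^2|S_{k_3}|^2|U|\,\dx\alpha$ cost ``only powers of $\log N$'' via a divisor argument is not correct as stated: the diagonal alone contributes $\gg N^{1/k_2+1/k_3}L^2$ to $\int|S_{k_2}S_{k_3}|^2$, which exceeds the smooth count $N^{2/k_2+2/k_3-1}$ by a positive power of $N$ whenever $1/k_2+1/k_3<1$. These integrals \emph{can} be brought under control in the stated range of $H$ (a Gallagher-type localisation plus a pointwise count of $a^{k_2}+c^{k_3}$ in intervals of length $H$ gives $\ll HN^{2/k_2+2/k_3-1}L^{O(1)}$ once $H\ge N^{1-1/k_3+\eps}$), but you must actually carry this out and check the losses are affordable uniformly in $\K$; the paper sidesteps the issue entirely by never forming a fourth moment --- in each error term it takes one factor in sup-norm ($\ll N^{1/k}$, Lemma~\ref{Stilde-bound}) and the remaining two in $L^2$ over the short arc $[-B/H,B/H]$, where Lemma~\ref{Tolev-Lemma} already gives essentially square-root--cancelled bounds. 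Second, your Euler--Maclaurin evaluation of the main term over the simplex is a legitimate alternative to the paper's Laplace-transform computation and would produce the same constant $\gamma_{k_1}\gamma_{k_2}\gamma_{k_3}/\Gamma(\rho)$; that part of the proposal is sound.
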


We recall the results in \cite{LanguascoZ2016a}, which correspond to
$\K = (1, 2, 2)$: here we must have $k_1 \ge 2$ because of the
limitation in the key Lemma~\ref{Tolev-Lemma}.
Theorem~\ref{ternary} contains as special case the results in
\cite{LanguascoZ2017e} where $\K = (k, 2, 2)$ and $k \ge 2$.
The case $k_1 = k_2 = k_3 = 3$ has been studied
in~\cite{CantariniGZ2018}, and the more general case
$k_1 = k_2 = \dots = k_s = \ell$ in \cite{Languasco2018}.

\begin{Theorem}
\label{ternary-RH}
Let $\K = (k_1, k_2, k_3)$ where $2 \le k_1 \le k_2 \le k_3$ be a
triple of integers.
For every $\eps > 0$ there exists a constant $C = C(\eps) > 0$,
independent of $\K$, such that
\[
  \sum_{n = N + 1}^{N + H} R(n; \K)
  =
  \frac{\gamma_{k_1} \gamma_{k_2} \gamma_{k_3}}{\Gamma(\rho)}
  H N^{\rho - 1}
  +
  \Odip{\K}{\Phi_{\K}(N ,H)}
\]
as $N \to +\infty$, uniformly for
$H = \infty\bigl( N^{1 - 1 / k_3} (\log N)^6 \bigr)$ with
$H < N^{1 - \eps}$, where
$f = \infty(g)$ means that $g = \odi{f}$ and
$\Phi_{\K}(N, H) = H^2 N^{\rho-2} + H^{1 / 2} N^{\rho - 1/2 - 1 / (2 k_3)} L^3$.
\end{Theorem}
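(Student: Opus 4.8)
The plan is to run the Hardy--Littlewood method in the form adapted to short-interval averages. Writing $e(\theta)=\exp(2\pi\ii\theta)$, set
\[
  S_k(\alpha)=\sum_{m^k\le N+H}\Lambda(m)\,e(m^k\alpha),
  \qquad
  T_k(\alpha)=\sum_{n\le N+H}\tfrac1k\,n^{1/k-1}\,e(n\alpha),
\]
and $U(\alpha)=\sum_{n=N+1}^{N+H}e(n\alpha)$, so that orthogonality gives the exact identity
\[
  \sum_{n=N+1}^{N+H}R(n;\K)
  =\int_0^1 S_{k_1}(\alpha)\,S_{k_2}(\alpha)\,S_{k_3}(\alpha)\,U(-\alpha)\,\dx\alpha .
\]
Here $T_k$ is the ``expected'' generating function for $S_k$; write $D_k=S_k-T_k$.

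First I would isolate the main term $\int_0^1 T_{k_1}T_{k_2}T_{k_3}\,U(-\alpha)\,\dx\alpha$. Opening the product and using orthogonality, this equals $\sum_{N<n\le N+H}\bigl(\sum_{m_1+m_2+m_3=n}\prod_{j=1}^{3}\tfrac1{k_j}m_j^{1/k_j-1}\bigr)$. The inner sum obeys the standard asymptotic $\sum_{m_1+m_2+m_3=n}\prod_j m_j^{1/k_j-1}=\frac{\Gamma(1/k_1)\Gamma(1/k_2)\Gamma(1/k_3)}{\Gamma(\rho)}n^{\rho-1}+\Odip{\K}{n^{\rho-1-1/k_3}}$, obtained by comparing the sum with the Dirichlet integral over the simplex (rigorously by partial summation). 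Using $\Gamma(1/k_j)/k_j=\gamma_{k_j}$ and then $\sum_{N<n\le N+H}n^{\rho-1}=HN^{\rho-1}+\Odip{\K}{H^2N^{\rho-2}}$ (a one-term Taylor expansion of $t\mapsto t^\rho$), this produces the stated main term together with the errors $\Odip{\K}{H^2N^{\rho-2}}$ — the first term of $\Phi_{\K}$ — and $\Odip{\K}{HN^{\rho-1-1/k_3}}$, the latter being $\ll\Phi_{\K}(N,H)$ since $H<N$.

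The error is then $\int_0^1(S_{k_1}S_{k_2}S_{k_3}-T_{k_1}T_{k_2}T_{k_3})U(-\alpha)\,\dx\alpha$, which I would write out by the multilinear expansion as a sum of the seven integrals $\int_0^1\bigl(\prod_{j\in\mathcal S}D_{k_j}\bigr)\bigl(\prod_{j\notin\mathcal S}T_{k_j}\bigr)U(-\alpha)\,\dx\alpha$ over nonempty $\mathcal S\subseteq\{1,2,3\}$, estimating each by Cauchy--Schwarz with a pairing chosen according to the sizes of $k_1\le k_2\le k_3$. The central analytic input is the mean-square bound for a difference $D_k$ twisted by $U$: setting $\Delta_k(M)=\bigl(\psi((M+H)^{1/k})-\psi(M^{1/k})\bigr)-\bigl((M+H)^{1/k}-M^{1/k}\bigr)$ for the discrepancy of $\psi$ over an interval of length $\asymp HN^{1/k-1}$ (for $M\asymp N$), Parseval gives
\[
  \int_0^1\bigl|D_k(\alpha)\bigr|^2\,\bigl|U(-\alpha)\bigr|^2\,\dx\alpha
  \ll\sum_{0\le M\le N}\bigl|\Delta_k(M)\bigr|^2+(\log N)^2 ,
\]
and, under the Riemann Hypothesis, a dyadic decomposition in $M$ together with a Saffari--Vaughan-type short-interval second moment bounds the right-hand side by $\ll HN^{1/k}(\log N)^3$; this is the step that replaces the zero-free-region saving of Theorem~\ref{ternary} by a power of $H$, and the requirement that this error be $\odi{\,\cdot\,}$ of the main term is precisely the hypothesis $H=\infty\bigl(N^{1-1/k_3}(\log N)^6\bigr)$. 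Combined with elementary $L^2$-bounds for $T_k$ and for products such as $T_{k_1}T_{k_2}$ and $T_{k_1}S_{k_3}$, and with the companion bound in which $D_k$ is twisted by a $T$ rather than by $U$, the term carrying $D_{k_3}$ alone turns out to be the dominant contribution, $\ll H^{1/2}N^{\rho-1/2-1/(2k_3)}(\log N)^3$, which is the second term of $\Phi_{\K}$; because $k_2\le k_3$ the largest exponent is the binding one.

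The \emph{main obstacle} is the term $\int_0^1 D_{k_1}S_{k_2}S_{k_3}\,U(-\alpha)\,\dx\alpha$ — equivalently, the two pieces of the expansion in which both $S_{k_2}$ and $S_{k_3}$ are retained — because the naive bound $\bigl(\int_0^1|D_{k_1}|^2|U|^2\bigr)^{1/2}\bigl(\int_0^1|S_{k_2}S_{k_3}|^2\bigr)^{1/2}$ does not close: already the diagonal forces $\int_0^1|S_{k_2}(\alpha)S_{k_3}(\alpha)|^2\,\dx\alpha\gg N^{1/k_2+1/k_3}$, a power of $N$ too large to be absorbed against any mean-square saving for $D_{k_1}$. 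To deal with it I would dissect the circle into a neighbourhood $\mathfrak M$ of the origin — on which the RH-enhanced explicit formula lets one replace each $S_{k_j}$ by $T_{k_j}$ with admissible error, so that this contribution reduces to the already-treated case and to the main term — and a complementary set $\mathfrak m$, on which Lemma~\ref{Tolev-Lemma} supplies the required mean value of the prime-power exponential sums over the short interval $(N,N+H]$; it is exactly here that the hypothesis $k_1\ge 2$ enters, inherited from the limitation of that lemma. Throughout one must keep the implied constants uniform in $\K$ (apart from the error term), verify that the logarithmic factors never accumulate beyond $(\log N)^3$, and check that the tail terms created by completing the $\mathfrak M$-integral to the full circle — where $T_k(\alpha)\ll|\alpha|^{-1/k}$ — are $\ll\Phi_{\K}(N,H)$; the careful choice of the cutoff defining $\mathfrak M$ and of the Cauchy--Schwarz pairing in each of the seven pieces is the real content of the argument.
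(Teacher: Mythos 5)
Your setup --- unsmoothed sums $S_k$ compared against the explicit function $T_k$, with orthogonality over $[0,1]$ --- is a legitimate alternative to the paper's Laplace-smoothed sums $\Stilde_k$ and the comparison function $\gamma_k z^{-1/k}$, and your main-term computation and the RH mean-value bound $\int_0^1 |D_k|^2|U|^2\,\dx\alpha \ll H N^{1/k} L^3$ are the right analogues of Lemmas~\ref{Laplace-formula}, \ref{mt-evaluation} and the conditional part of Lemma~\ref{LP-Lemma-gen}. The proposal breaks down, however, precisely at the terms you flag, and for a reason you do not resolve. First, a bookkeeping inconsistency: your seven-term expansion contains only $D$'s and $T$'s, so the ``obstacle'' $\int D_{k_1}S_{k_2}S_{k_3}U$ never occurs in your own decomposition. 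What does occur is $\int_0^1 T_{k_1}T_{k_2}D_{k_3}U(-\alpha)\,\dx\alpha$, and your estimate for it is false in general. By Parseval, $\int_0^1|T_{k_1}T_{k_2}|^2\,\dx\alpha \ge 1/(k_1k_2)^2$ (the coefficient of $\e(2\alpha)$), and when $1/k_1+1/k_2 \le 1/2$ this is essentially sharp, since the convolved coefficients are $\asymp m^{1/k_1+1/k_2-1}$ and the resulting series converges. Your Cauchy--Schwarz pairing then yields only $\ll H^{1/2}N^{1/(2k_3)}L^{3/2}$, which for $\K=(5,5,5)$ and \emph{every} admissible $H$ exceeds the main term: with $H=N^{4/5+\delta}$, $0<\delta<1/5$, the main term is $N^{2/5+\delta}$ while the bound is $N^{1/2+\delta/2}L^{3/2}$. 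So the claim that the $D_{k_3}$ term is $\ll H^{1/2}N^{\rho-1/2-1/(2k_3)}L^3$ fails for large $k_1,k_2$ (it is correct only when $1/k_1+1/k_2>1/2$), and the theorem is asserted uniformly for all $k_j\ge 2$.

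The mechanism of the loss is the slowly decaying major-arc ridge $|T_k(\alpha)|\asymp|\alpha|^{-1/k}$, whose global $L^2$ norm is of order $1$ for $k>2$ --- far too large --- and no pairing of $T_{k_1}T_{k_2}$ against $D_{k_3}U$ over the whole circle can recover, because only the factor carrying $U$ benefits from the localization to $|\alpha|\lesssim 1/H$. The paper instead keeps that localization on \emph{every} factor: it splits at $1/H$, bounds one full factor in sup norm by $N^{1/k_1}$ (Lemma~\ref{Stilde-bound}), and pairs the other full factor with $\E_{k_3}$ by Cauchy--Schwarz using the \emph{short-interval} mean value $\int_{-1/H}^{1/H}|\Stilde_{k_2}|^2\,\dx\alpha \ll (N^{1/k_2}/H+N^{2/k_2-1})L^3$ of Lemma~\ref{Tolev-Lemma} (and Lemma~\ref{weighted-L2} on the complement, where $|U|\le|\alpha|^{-1}$); both integrals then gain a factor $1/H$, which absorbs the $H$ coming from $|U|\le H$. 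Your closing paragraph gestures at something of this kind (``dissect into $\mathfrak{M}$ and $\mathfrak{m}$, use Tolev's mean value on $\mathfrak{m}$''), but it is not carried out, and the one estimate you do write down for the dominant term does not close. This is a genuine gap rather than a detail of bookkeeping.
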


The limitation for $H$ in Theorem~\ref{ternary} is due to the
corresponding one for $\xi$ in Lemma~\ref{LP-Lemma-gen}, while the
limitation for $H$ in Theorem~\ref{ternary-RH} is the expected one.
Theorem~\ref{ternary-RH} for $\K = (3, 3, 3)$ is slightly weaker than
the corresponding result in \cite{CantariniGZ2018}: this is due to the
fact that the identity \eqref{identity} is less efficient than the
special one used there.

We remark that ternary problems are easier to deal with than binary
problems, because we can more efficiently use the H\"older inequality
to bound error terms.
We also remark that we have no constraints on the values of the
exponents $k_1$, $k_2$ and $k_3$, but when they are ``large'' the
range for $H$ reduces correspondingly.

\section{Definitions and preparation for the proofs}

For real $\alpha$ we write $\e(\alpha) = \e^{2 \pi \ii \alpha}$.
We take $N$ as a large positive integer, and write $L = \log N$ for
brevity.
In this and in the following section $k$ denotes any positive real
number.
Let $z = 1 / N - 2 \pi \ii \alpha$ and
\begin{equation}
\label{Stilde-def}
  \Stilde_k(\alpha)
  =
  \sum_{n \ge 1} \Lambda(n) \e^{-n^k / N} \e(n^k \alpha)
  =
  \sum_{n \ge 1} \Lambda(n) \e^{- n^k z}.
\end{equation}
Thus, recalling definition \eqref{Rk-def} and using \eqref{Stilde-def},
for all $n \ge 1$ we have
\begin{equation}
\label{basic-Rk}
  R(n; \K)
  =
  \sum_{n_1^{k_1} + n_2^{k_2} + n_3^{k_3} = n}
    \Lambda(n_1) \Lambda(n_2) \Lambda(n_3)
  =
  \e^{n / N}
  \int_{-1/2}^{1/2}
    \Stilde_{k_1}(\alpha) \Stilde_{k_2}(\alpha) \Stilde_{k_3}(\alpha)
    \, \e(-n \alpha) \, \dx \alpha.
\end{equation}
It is clear from the above identity that we are only interested in the
range $\alpha \in [-1/2, 1/2]$.
We record here the basic inequality
\begin{equation}
\label{z-bound}
  \vert z \vert^{-1}
  \ll
  \min \{ N, \vert \alpha \vert^{-1} \}.
\end{equation}
We also need the following exponential sum over the ``short interval''
$[1, H]$
\[
  U(\alpha, H)
  =
  \sum_{m = 1}^H \e(m \alpha),
\]
where $1 \le H \le N$ is a large integer.
We recall the simple inequality
\begin{equation}
\label{U-bound}
  \vert U(\alpha, H) \vert
  \le
  \min \{ H, \vert \alpha \vert^{-1} \}.
\end{equation}
With these definitions in mind and recalling \eqref{basic-Rk}, we
remark that
\begin{equation}
\label{basic-identity}
  \sum_{n = N + 1}^{N + H}
    \e^{-n / N} R(n; \K)
  =
  \int_{-1/2}^{1/2}
    \Stilde_{k_1}(\alpha) \Stilde_{k_2}(\alpha) \Stilde_{k_3}(\alpha)
    U(-\alpha, H) \, \e(-N \alpha) \, \dx \alpha,
\end{equation}
which is the starting point for our investigation.
The basic strategy is to replace each factor $\Stilde_k(\alpha)$ by
its expected main term, which is $\gamma_k / z^{1 / k}$, and
estimating the ensuing error term by means of a combination of
techniques and bounds for exponential sums.
One key ingredient is the $L^2$-bound in Lemma~\ref{LP-Lemma-gen},
which we may use only in a restricted range, and we need a different
argument on the remaining part of the integration interval; this leads
to some complications in details in the proof of the unconditional
result.

\section{Lemmas}

For brevity, we set
\[
  \E_k(\alpha)
  :=
  \Stilde_k(\alpha)
  -
  \frac{\gamma_k}{z^{1 / k}}
  \qquad\text{and}\qquad
  A(N; c)
  :=
  \exp\Bigl\{ c \Bigl(\frac{\log N}{\log\log N}\Bigr)^{1/3} \Bigr\},
\]
where $c$ is a real constant.

\begin{Lemma}[Lemma~3 of \cite{LanguascoZ2016a}]
\label{LP-Lemma-gen}
Let $\eps$ be an arbitrarily small positive constant, $k \ge 1$ be
an integer, $N$ be a sufficiently large integer and $L = \log N$.
Then there exists a positive constant $c_1 = c_1(\eps)$, which does
not depend on $k$, such that
\[
  \int_{-\xi}^{\xi}
    \bigl\vert \E_k(\alpha) \bigr\vert^2 \, \dx \alpha
  \ll_k
  N^{2 / k - 1} A(N; - c_1)
\]
uniformly for $0 \le \xi < N^{ -1 + 5 / (6 k) - \eps}$.
Assuming the Riemann Hypothesis we have
\[
  \int_{-\xi}^{\xi} \,
    \bigl\vert \E_k(\alpha) \bigr\vert^2 \, \dx \alpha
  \ll_k
  N^{1 / k}\xi L^2
\]
uniformly for $0 \le \xi \le 1 / 2$.
\end{Lemma}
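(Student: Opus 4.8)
The plan is to realise $\E_k(\alpha)$ as a Fourier transform of an explicit signed measure, to use Gallagher's lemma to pass from the mean square of $\E_k$ on $(-\xi,\xi)$ to a mean square of the error term of the prime number theorem over short intervals, and then to invoke the latter — conditionally on RH in the second estimate, and via the Vinogradov--Korobov zero-free region together with a zero-density estimate in the first. The starting point is the identity $\E_k(\alpha)=\widehat\mu(\alpha)$, where $\widehat\mu(\alpha)=\int_0^{\infty}\e(u\alpha)\,\dx\mu(u)$ and
\[
  \dx\mu(u)=\sum_{n\ge1}\Lambda(n)\e^{-n^k/N}\,\delta_{n^k}(\dx u)-\frac1k\,u^{1/k-1}\e^{-u/N}\,\dx u ;
\]
the continuous part is simply the Mellin representation $\gamma_k z^{-1/k}=\tfrac1k\int_0^{\infty}u^{1/k-1}\e^{-uz}\,\dx u$ (recall $\gamma_k=\Gamma(1/k)/k$). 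Summation by parts against $\psi$ gives the further useful forms $\E_k(\alpha)=kz\int_0^{\infty}t^{k-1}\e^{-t^kz}(\psi(t)-t)\,\dx t$ and, for every $h>0$,
\[
  \mu\bigl((x,x+h]\bigr)=\int_{x^{1/k}}^{(x+h)^{1/k}}\e^{-y^k/N}\,\dx\bigl(\psi(y)-y\bigr).
\]

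Applying Gallagher's lemma to $\widehat\mu$ gives, for $0<\xi\le1/2$,
\[
  \int_{-\xi}^{\xi}\bigl|\E_k(\alpha)\bigr|^2\,\dx\alpha\ll\xi^2\int_0^{\infty}\Bigl|\mu\bigl((x,x+\tfrac1{2\xi}]\bigr)\Bigr|^2\,\dx x .
\]
Put $x=y_0^k$, so that the window $(x,x+\tfrac1{2\xi}]$ becomes $(y_0,y_0+\ell]$ with $\ell=\ell(y_0)=(y_0^k+\tfrac1{2\xi})^{1/k}-y_0\asymp(k\xi)^{-1}y_0^{1-k}$ for $y_0\gg\xi^{-1/k}$. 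When $\xi\gg1/N$ the weight $\e^{-y^k/N}$ varies only by a bounded factor across such a window, so $\mu((x,x+\tfrac1{2\xi}])\ll\e^{-y_0^k/N}|\psi(y_0+\ell)-\psi(y_0)-\ell|$ up to a negligible term; the regime $\xi\lesssim1/N$ is handled the same way, the window then containing essentially all of the mass of $\mu$, so that the bound reduces to the prime number theorem at $N^{1/k}$. Splitting dyadically in $y_0$, the factor $\e^{-2y_0^k/N}$ confines $y_0\lesssim Y:=N^{1/k}$ and the block $y_0\asymp Y$ dominates, with $\ell\asymp\ell_0:=(k\xi)^{-1}N^{-1+1/k}$. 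Hence everything is governed by the short-interval second moment $\int_Y^{2Y}|\psi(y+\ell_0)-\psi(y)-\ell_0|^2\,\dx y$.

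On RH the classical estimate $\int_Y^{2Y}|\psi(y+h)-\psi(y)-h|^2\,\dx y\ll hYL^2$ (valid for $1\le h\le Y$, and trivial for $h<1$) feeds back through the above to give $\ll_k\xi N^{1/k}L^2$; the few ranges of $\xi$ in which one must fall back on the crude ``$\ell_0\gtrsim Y$'' sub-case cost only logarithms, absorbed by the $L^2$. For the unconditional estimate one replaces RH by the Vinogradov--Korobov zero-free region $\beta\le1-c((\log|\gamma|)^{2/3}(\log\log|\gamma|)^{1/3})^{-1}$ and a Huxley-type bound $N(\sigma,T)\ll T^{c(1-\sigma)}(\log T)^A$; the usual explicit-formula argument then yields
\[
  \int_Y^{2Y}\bigl|\psi(y+h)-\psi(y)-h\bigr|^2\,\dx y\ll h^2Y\exp\Bigl\{-c_2\Bigl(\tfrac{\log Y}{\log\log Y}\Bigr)^{1/3}\Bigr\}
\]
uniformly for $h\gg Y^{1/6+\delta}$ — here $Y^{1/6}$ is the classical threshold below which the short-interval second moment is not known to be small, and the saving is essentially $\exp\{(\beta_0-1)\log Y\}$ for the corresponding zero-free-region value $\beta_0$. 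The hypothesis $\xi<N^{-1+5/(6k)-\eps}$ is precisely what forces $\ell_0\gg Y^{1/6+\delta}$ with $Y=N^{1/k}$; combining this with the prime number theorem (with Vinogradov--Korobov error) on $|\alpha|\le1/N$, and using $\log Y=k^{-1}\log N$ to see that the saving persists as $A(N;-c_1)$ with $c_1$ independent of $k$, gives the bound $\ll_k N^{2/k-1}A(N;-c_1)$.

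The delicate step is the unconditional short-interval second-moment estimate at the critical length $Y^{1/6+\delta}$, with the stated saving and — this is what forces the slightly unusual shape of $A(N;c)$ rather than the more familiar $\exp\{c(\log N)^{3/5}(\log\log N)^{-1/5}\}$ — with every constant tracked to be independent of $k$ after the rescaling $y=x^{1/k}$. The Fourier reduction, Gallagher's lemma and the RH input are essentially routine.
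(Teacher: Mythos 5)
This lemma is quoted here from \cite{LanguascoZ2016a} without proof, and your sketch follows essentially the same route as the proof given there (as corrected in \cite{LanguascoZ2017e}): Gallagher's lemma applied to $\E_k$ viewed as the Fourier transform of the signed measure $\sum_n\Lambda(n)\e^{-n^k/N}\delta_{n^k}(\dx u)-\tfrac1k u^{1/k-1}\e^{-u/N}\,\dx u$, the substitution $x=y^k$ reducing matters to the mean square of $\psi(y+\ell)-\psi(y)-\ell$ for $y\asymp N^{1/k}$, and then Selberg's theorem under RH, respectively the Saffari--Vaughan/Huxley/Vinogradov--Korobov bound with the $Y^{1/6+\delta}$ threshold, which is indeed the origin both of the exponent $5/(6k)$ and of the shape of $A(N;-c_1)$. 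You also correctly single out the two genuinely delicate points, namely the variable window length and weight after the substitution (precisely the source of the oversights corrected in \cite{LanguascoZ2017e}) and the $k$-uniformity of $c_1$, which does survive because the loss of $k^{-1/3}$ coming from $\log Y=k^{-1}\log N$ is offset by the factor of $k$ gained in the exponent $\delta'\asymp k\eps$ measuring how far $\ell_0$ exceeds $Y^{1/6}$.
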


We remark that the proof of Lemma~3 in \cite{LanguascoZ2016a} contains
oversights which are corrected in \cite{LanguascoZ2017e}.
The next result is a variant of Lemma~4 of \cite{LanguascoZ2016a}: we
just follow the proof until the last step.
We need it to avoid dealing with the ``periphery'' of the major arc in
the unconditional case.

\begin{Lemma}[Lemma~4 of \cite{LanguascoZ2016a}]
\label{Laplace-formula}
Let $N$ be a positive integer, $z = z(\alpha) = 1 / N - 2 \pi \ii \alpha$,
and $\mu > 0$.
Then, uniformly for $n \ge 1$ and $X > 0$ we have
\[
  \int_{-X}^X z^{-\mu} \e(-n \alpha) \, \dx \alpha
  =
  \e^{- n / N} \frac{n^{\mu - 1}}{\Gamma(\mu)}
  +
  \Odip{\mu}{\frac1{n X^{\mu}}}.
\]
\end{Lemma}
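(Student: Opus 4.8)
The plan is to reduce the computation to the classical Hankel-type integral for the reciprocal Gamma function. The key observation is that the integral $\int_{-\infty}^{+\infty} z^{-\mu} \e(-n\alpha)\,\dx\alpha$, where $z = 1/N - 2\pi\ii\alpha$, can be evaluated exactly by the substitution $w = z = 1/N - 2\pi\ii\alpha$, which turns it into a contour integral along the vertical line $\Re w = 1/N$. Recalling the standard formula (valid for $\Re w > 0$ on the contour and $\mu > 0$)
\[
  \frac{1}{2\pi\ii} \int_{(c)} w^{-\mu} \e^{nw}\,\dx w = \frac{n^{\mu-1}}{\Gamma(\mu)}
\]
for $c > 0$, and accounting for the change of variables $\dx w = -2\pi\ii\,\dx\alpha$ together with the factor $\e^{n/N}$ coming from $\e^{nw} = \e^{n/N}\e^{-2\pi\ii n\alpha}$, I would obtain
\[
  \int_{-\infty}^{+\infty} z^{-\mu}\e(-n\alpha)\,\dx\alpha = \e^{-n/N}\frac{n^{\mu-1}}{\Gamma(\mu)}.
\]

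Next I would estimate the tails. Writing the desired integral as the full integral minus the two tails $\int_{|\alpha| > X} z^{-\mu}\e(-n\alpha)\,\dx\alpha$, I would bound $|z^{-\mu}| = |z|^{-\mu} \le (2\pi|\alpha|)^{-\mu}$ on the tail, so that
\[
  \Bigl| \int_{X}^{+\infty} z^{-\mu}\e(-n\alpha)\,\dx\alpha \Bigr|
  \le \int_{X}^{+\infty} (2\pi\alpha)^{-\mu}\,\dx\alpha,
\]
but this converges only for $\mu > 1$. To handle all $\mu > 0$ uniformly, I would instead integrate by parts once in the tail integral: writing $\e(-n\alpha) = \frac{-1}{2\pi\ii n}\frac{\dx}{\dx\alpha}\e(-n\alpha)$ and differentiating $z^{-\mu}$ (which brings down a factor $\mu \cdot 2\pi\ii \cdot z^{-\mu-1}$, now with exponent $\mu+1 > 1$), the boundary term at $\alpha = X$ contributes $\Odip{\mu}{(nX^{\mu})^{-1}}$ and the new integral contributes $\Odip{\mu}{(n X^{\mu})^{-1}}$ as well, after bounding $|z|^{-\mu-1} \le (2\pi|\alpha|)^{-\mu-1}$ and integrating. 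Doubling for the two symmetric tails gives the claimed error term $\Odip{\mu}{1/(nX^{\mu})}$.

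The main technical point to be careful about is the justification of the contour-integral evaluation: one must check that the integrand $z^{-\mu}\e(-n\alpha)$ is absolutely integrable over $\mathbb{R}$ when $\mu > 1$ (immediate from $|z^{-\mu}| \le (2\pi|\alpha|)^{-\mu}$ near infinity and continuity near $\alpha = 0$), and extend to $0 < \mu \le 1$ either by the integration-by-parts regularization above applied to the whole line, or by analytic continuation in $\mu$. The branch of $z^{-\mu}$ must be fixed consistently — the principal branch, legitimate since $\Re z = 1/N > 0$ throughout — so that $z^{-\mu} = |z|^{-\mu}\e^{-\ii\mu\arg z}$ with $|\arg z| < \pi/2$. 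No cancellation subtleties arise beyond this; the rest is the routine tail estimate described above. I expect the interchange/justification of the exact evaluation to be the only step requiring genuine care, the tail bound being entirely standard.
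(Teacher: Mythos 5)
Your proposal is correct and follows essentially the same route as the proof of Lemma~4 in \cite{LanguascoZ2016a} that the paper cites: evaluate the integral over the whole real line via the classical vertical-line (Hankel/Laplace-inversion) formula for $1/\Gamma(\mu)$, then bound the two tails by a single integration by parts, which produces exactly the factor $1/(n X^{\mu})$. The points you flag as needing care (the branch of $z^{-\mu}$ for $\Re z = 1/N > 0$, and the conditional convergence of the full-line integral when $0 < \mu \le 1$) are indeed the only delicate ones, and your treatment of them is adequate.
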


\begin{Lemma}[Lemma 3.3 of \cite{CantariniGZ2018}]
\label{Stilde-bound}
We have $\Stilde_k(\alpha) \ll_k N^{1 / k}$.
\end{Lemma}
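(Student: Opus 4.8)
The plan is to throw away all the oscillation in $\alpha$ and reduce to the single point $\alpha = 0$. Since $z = z(\alpha) = 1/N - 2\pi\ii\alpha$ has real part $1/N > 0$, we have $\vert \e^{-n^k z}\vert = \e^{-n^k/N}$ for every $n \ge 1$; combined with the positivity of $\Lambda$ this gives
\[
  \vert \Stilde_k(\alpha)\vert
  \le
  \sum_{n \ge 1} \Lambda(n)\, \e^{-n^k/N}
  =
  \Stilde_k(0),
\]
a bound that is uniform in $\alpha$. It therefore suffices to prove that $\Stilde_k(0) \ll_k N^{1/k}$.

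For this I would use partial summation together with the Chebyshev bound $\psi(t) := \sum_{n \le t} \Lambda(n) \ll t$. Since $\psi(1) = \Lambda(1) = 0$ and $\e^{-t^k/N}$ decays faster than any power of $t$, the boundary contributions vanish and
\[
  \Stilde_k(0)
  =
  \int_{1}^{\infty} \e^{-t^k/N}\, \dx \psi(t)
  =
  \frac{k}{N}\int_{1}^{\infty} \psi(t)\, t^{k-1}\, \e^{-t^k/N}\, \dx t
  \ll
  \frac{k}{N}\int_{0}^{\infty} t^{k}\, \e^{-t^k/N}\, \dx t .
\]
The substitution $u = t^k/N$ turns the last integral into $N^{1/k}\int_0^\infty u^{1/k}\, \e^{-u}\, \dx u = \gamma_k N^{1/k}$; since $\gamma_k = \Gamma(1+1/k)$ is a finite constant depending only on $k$, this yields $\Stilde_k(0) \ll_k N^{1/k}$, which is the claim.

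There is no genuinely hard step here: the computation is essentially the one used to evaluate $\int_{-X}^{X} z^{-\mu}\,\dx\alpha$ in the proof of Lemma~\ref{Laplace-formula}, stripped of its oscillatory factor. The only mild points requiring care are the verification that the boundary terms in the partial summation are negligible (immediate from $t\,\e^{-t^k/N}\to 0$) and the observation that the implied constant enters only through $\gamma_k$. Alternatively, one could split the sum at $t = N^{1/k}$, bounding the short range $n \le N^{1/k}$ trivially by $\psi(N^{1/k}) \ll N^{1/k}$ and the tail by the rapid decay of $\e^{-t^k/N}$; this gives the same estimate with a cruder constant.
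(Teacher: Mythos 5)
Your proof is correct and is essentially the argument the paper has in mind (it only remarks that the bound ``is a consequence of the Prime Number Theorem'' and cites Lemma~3.3 of \cite{CantariniGZ2018}): reduce to $\alpha=0$ via $\vert\e^{-n^k z}\vert=\e^{-n^k/N}$, then apply partial summation with the Chebyshev bound $\psi(t)\ll t$ and evaluate the resulting Gamma-integral. The only cosmetic remark is that Chebyshev's estimate already suffices, so the full strength of the Prime Number Theorem is not needed here.
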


This is a consequence of the Prime Number Theorem.
We notice that by Lemma~\ref{Stilde-bound} and~\eqref{z-bound} we have
\begin{equation}
\label{y-bound}
  \E_k(\alpha)
  =
  \Stilde_k(\alpha) - \frac{\gamma_k}{z^{1 / k}}
  \ll_k
  N^{1 / k}.
\end{equation}

Our next tool is the extension to $\Stilde_k$ of Lemma~7 of Tolev
\cite{Tolev1992}.
A simple integration by parts then yields Lemma~\ref{weighted-L2}.

\begin{Lemma}
\label{Tolev-Lemma}
Let $k > 1$ and $\tau > 0$. Then
\[
  \int_{-\tau}^{\tau} \vert \Stilde_k(\alpha) \vert^2 \, \dx \alpha
  \ll_k
  \bigl(\tau N^{1/k} + N^{2/k - 1}\bigr) L^3.
\]
\end{Lemma}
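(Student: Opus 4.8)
The plan is to follow Tolev's original argument for the classical exponential sum $\sum_{n\le N^{1/k}}\Lambda(n)\e(n^k\alpha)$ and adapt it to the smoothed sum $\Stilde_k(\alpha)$. First I would open the square and integrate termwise:
\[
  \int_{-\tau}^{\tau} \vert \Stilde_k(\alpha) \vert^2 \, \dx \alpha
  =
  \sum_{m,n\ge 1} \Lambda(m)\Lambda(n) \e^{-(m^k+n^k)/N}
  \int_{-\tau}^{\tau} \e\bigl((m^k-n^k)\alpha\bigr)\,\dx\alpha.
\]
The inner integral equals $2\tau$ when $m=n$ and is $\ll \min\{\tau,\vert m^k-n^k\vert^{-1}\}$ otherwise. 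The diagonal contribution $2\tau\sum_{n\ge 1}\Lambda(n)^2\e^{-2n^k/N}$ is $\ll_k \tau N^{1/k}L$ by partial summation from the Chebyshev-type bound $\sum_{n\le y}\Lambda(n)^2\ll yL$, which is comfortably within the claimed bound. The real work is the off-diagonal terms.

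For the off-diagonal part I would first note that the exponential weight $\e^{-(m^k+n^k)/N}$ effectively truncates both variables at $m,n\ll N^{1/k}L^{1/k}$ (the tail where $\max(m,n)^k \gg N L$ contributes a negligible amount, since $\e^{-NL/N}=N^{-1}$ kills it after a trivial bound on the number of terms). Having reduced to $m,n\le Y$ with $Y\asymp N^{1/k}L^{1/k}$, I would estimate
\[
  \sum_{\substack{m\ne n\\ m,n\le Y}} \Lambda(m)\Lambda(n)\min\Bigl\{\tau,\frac1{\vert m^k-n^k\vert}\Bigr\}
\]
by writing $m^k-n^k=(m-n)(m^{k-1}+\cdots+n^{k-1})$, so that for $m>n$ one has $\vert m^k-n^k\vert \asymp_k (m-n)\,m^{k-1}$ when $m\ge 2n$, and $\asymp_k (m-n)\,n^{k-1}$ when $n<m<2n$. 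Setting $h=m-n$ and $L^2\ll \log$ factors from $\Lambda$, the sum becomes essentially $\sum_{n\le Y}\sum_{h\ge 1}\min\{\tau, (h n^{k-1})^{-1}\} L^2$; splitting the $h$-sum at $h\asymp (\tau n^{k-1})^{-1}$ gives $\ll L^2\bigl(1 + n^{-(k-1)}\log Y\bigr)$ per $n$, hence after summing over $n\le Y$ a total of $\ll (Y + L)L^3 \ll N^{1/k}L^3$ (absorbing the $L^{1/k}$ from $Y$ into one of the $L$'s). Combining with the diagonal term $\ll \tau N^{1/k}L \ll (\tau N^{1/k}+N^{2/k-1})L^3$ yields the stated estimate.

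The main obstacle I anticipate is the bookkeeping in the off-diagonal sum: one must handle the two regimes $m\ge 2n$ and $n<m<2n$ separately, keep careful track of the dependence on $k$ in the constants $\asymp_k$ coming from $m^{k-1}+m^{k-2}n+\cdots+n^{k-1}$, and make sure the truncation at $Y\asymp N^{1/k}L^{1/k}$ is justified cleanly (so that the $\Lambda$-weights, which are $\le L$ each, only cost a bounded power of $L$). A cleaner route that avoids the explicit divisor-type manipulation is to mimic Tolev exactly: bound $\min\{\tau,\vert m^k-n^k\vert^{-1}\}$ by grouping the pairs $(m,n)$ according to the dyadic size of $\vert m^k - n^k\vert$ and using that, for fixed difference value $r$, the number of pairs with $m^k-n^k=r$ and $m,n\le Y$ is $\ll_\eps Y^\eps$ — but since we only need an $L^3$, not an $N^\eps$, saving, the elementary argument above is preferable. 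Either way, the term $N^{2/k-1}L^3$ in the conclusion is only needed to dominate the diagonal when $\tau$ is as small as $N^{1/k-1}$, and is otherwise dominated by $\tau N^{1/k}L^3$.
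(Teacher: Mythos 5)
Your overall plan (open the square, separate diagonal from off-diagonal) is reasonable, but the off-diagonal estimate does not close, and the bound you end up with does not imply the lemma. You conclude the off-diagonal analysis with ``$\ll (Y+L)L^3 \ll N^{1/k}L^3$'' and then assert that combining with the diagonal yields the statement. It does not: since $N^{2/k-1}=N^{1/k}\cdot N^{1/k-1}$, for $k\ge 2$ and $\tau\le 1/2$ the right-hand side of the lemma is at most $N^{1/k}\bigl(\tau+N^{1/k-1}\bigr)L^3=o(N^{1/k}L^3)$ as soon as $\tau=o(1)$, so your off-diagonal bound is \emph{larger} than the target, not smaller. The immediate source of the loss is the ``$1$'' in your per-$n$ bound $L^2\bigl(1+n^{-(k-1)}\log Y\bigr)$, which sums to $YL^2\asymp N^{1/k}L^{2+1/k}$ over $n\le Y$; that ``$1$'' is in fact spurious (the block $h\le h_0$ contributes at most $\tau h_0=n^{-(k-1)}$, not $1$). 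But even after correcting it, your off-diagonal total is $\ll L^3\sum_{n\le Y}n^{1-k}$, i.e.\ $\asymp L^4$ for $k=2$ and $\asymp L^3$ for $k\ge 3$, and this is still not $\ll(\tau N^{1/k}+N^{2/k-1})L^3$ once $\tau N^{1/k}+N^{2/k-1}=o(1)$ --- which is precisely the regime in which the paper uses the lemma (in \S\ref{sub-I2} one takes $\tau=B/H$ with $H$ close to $N$, and the term that actually matters there is $N^{2/k-1}L^3$, which is $o(1)$ for $k\ge 3$). Your closing remark that $N^{2/k-1}L^3$ ``is only needed to dominate the diagonal'' has the roles of the two terms backwards: the diagonal is always $\ll\tau N^{1/k}L$, and the second term is exactly what must absorb the near-diagonal pairs $0<\vert m^k-n^k\vert\le\tau^{-1}$; getting it requires a sharper count of those pairs than your $h$-versus-$n$ bookkeeping provides.

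The paper sidesteps this difficulty by a different route, which you may want to compare with. It truncates $\Stilde_k$ at $P=(2NL/k)^{1/k}$, writes the truncated sum by partial summation as $-\frac kN\int_1^P t^{k-1}\e^{-t^k/N}S_k(\alpha;t)\,\dx t$ with $S_k(\alpha;t)=\sum_{n\le t}\Lambda(n)\e(n^k\alpha)$, applies Cauchy--Schwarz and interchanges the integrals, and then invokes the unsmoothed mean-value estimate $\int_{-\tau}^{\tau}\vert S_k(\alpha;t)\vert^2\,\dx\alpha\ll_k(\tau t+t^{2-k})(\log t)^3$ (Tolev's Lemma~7, in the form of Lemma~5 of \cite{GambiniLZ2018}) as a black box; a final change of variables produces $(\tau N^{1/k}+N^{2/k-1})L^3$. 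A self-contained proof along your lines would in effect have to reprove that cited estimate, and that is exactly where the difficulty you are glossing over lives.
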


\begin{proof}
Letting $P=(2NL/k)^{1/k}$, a direct estimate
gives $\Stilde_{k}(\alpha)= \sum_{n\le P} \Lambda(n) \e^{-n^k/N} \e(n^k\alpha) + \Odipm{k}{L^{1/k}}$.
Recalling that the Prime Number Theorem implies
$S_{k}(\alpha;t) := \sum_{n\le t} \Lambda(n) \e(n^{k}\alpha)  \ll t$, a
partial integration argument gives
\[
\sum_{n\le P} \Lambda(n) \e^{-n^k/N} \e(n^k\alpha)
=
-\frac{k}{N} \int_1^P t^{k-1} \e^{-t^k/N} S_{k}(\alpha;t) \  \dx t
+ \Odipm{k}{L^{1/k}}.
\]
Using the inequality $(\vert a\vert + \vert b \vert)^{2} \ll \vert a\vert^{2} + \vert b \vert^{2}$, Cauchy-Schwarz inequality
and interchanging the integrals, we get that
\begin{align*}
\int_{-\tau}^{\tau}
&\vert \Stilde_{k}(\alpha)\vert ^{2} \ \dx\alpha
\ll_{k}
\int_{-\tau}^{\tau}
\Bigl\vert \frac{1}{N} \int_1^P t^{k-1} \e^{-t^k/N} S_{k}(\alpha;t)\  \dx t \Bigr\vert ^{2} \ \dx\alpha
+  L^{2/k}
\\
&\ll_{k}
\frac{1}{N^{2}}
\Bigl( \int_1^P t^{k-1} \e^{-t^k/N} \  \dx t \Bigr)
\Bigl( \int_1^P t^{k-1} \e^{-t^k/N} \int_{-\tau}^{\tau}\vert S_{k}(\alpha;t)\vert ^{2} \dx\alpha  \  \dx t \Bigr)
+  L^{2/k}.
\end{align*}
Lemma 7 of Tolev \cite{Tolev1992} in the form given in Lemma 5 of \cite{GambiniLZ2018} on
$S_{k}(\alpha;t) = \sum_{n\le t} \Lambda(n) \e(n^k\alpha)$ implies
that
$\int_{-\tau}^{\tau} \vert S_{k}(\alpha;t)\vert^{2}\ \dx\alpha \ll_{k}
\bigl(\tau\ t+t^{2-k}\bigr)(\log t)^3 $.
Using such an estimate
and remarking that $ \int_1^P t^{k-1} \e^{-t^k/N} \  \dx t \ll_k N$,
we obtain  that
\begin{align*}
\int_{-\tau}^{\tau}
\vert \Stilde_{k}(\alpha)\vert ^{2} \ \dx\alpha
& \ll_{k}
\frac{1}{N}
 \int_1^P  \bigl(\tau\ t+t^{2-k}\bigr) t^{k-1} \e^{-t^k/N} (\log t)^3 \  \dx t
+  L^{2/k}
\\&
 \ll_{k}
 \bigl(\tau N^{1/k}+N^{2/k-1}\bigr)L^{3}
\end{align*}
by a direct computation.
\end{proof}

\begin{Lemma}
\label{weighted-L2}
For $k > 1$ and $N^{-c} \le \tau \le 1 / 2$, where $c > 0$ is
fixed, we have
\[
  \int_\tau^{1 / 2}
    \vert \Stilde_k(\alpha) \vert^2 \, \frac{\dx \alpha}\alpha
  \ll_k
  N^{1 / k} L^4 + \tau^{-1} N^{2 / k - 1} L^3.
\]
\end{Lemma}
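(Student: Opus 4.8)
The plan is to reduce the weighted integral $\int_\tau^{1/2}\vert\Stilde_k(\alpha)\vert^2\,\dx\alpha/\alpha$ to the unweighted $L^2$-estimate of Lemma~\ref{Tolev-Lemma} by a dyadic decomposition of the interval $[\tau,1/2]$ together with partial summation to handle the factor $1/\alpha$. Concretely, I would write $[\tau,1/2]\subseteq\bigcup_{j}[2^{j}\tau,2^{j+1}\tau]$ with $j$ running from $0$ up to $J\ll L$ (since $\tau\ge N^{-c}$ forces only $O(\log(1/\tau))=O(L)$ dyadic blocks), and on each block bound $1/\alpha\le (2^{j}\tau)^{-1}$, so that
\[
  \int_\tau^{1/2}\vert\Stilde_k(\alpha)\vert^2\,\frac{\dx\alpha}{\alpha}
  \ll
  \sum_{0\le j\le J}\frac{1}{2^{j}\tau}\int_{-2^{j+1}\tau}^{2^{j+1}\tau}\vert\Stilde_k(\alpha)\vert^2\,\dx\alpha.
\]

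Now apply Lemma~\ref{Tolev-Lemma} with $\tau$ there replaced by $2^{j+1}\tau$: each inner integral is $\ll_k\bigl(2^{j+1}\tau N^{1/k}+N^{2/k-1}\bigr)L^3$. Substituting this in and summing over $j$, the first term contributes $\sum_j (2^j\tau)^{-1}\cdot 2^{j+1}\tau N^{1/k}L^3\ll_k N^{1/k}L^3\cdot(J+1)\ll_k N^{1/k}L^4$, using $J\ll L$; the second term contributes $\sum_j(2^j\tau)^{-1}N^{2/k-1}L^3$, a geometric series dominated by its first term $\tau^{-1}N^{2/k-1}L^3$. Combining the two bounds yields exactly $N^{1/k}L^4+\tau^{-1}N^{2/k-1}L^3$, as claimed.

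Alternatively, and perhaps more cleanly, one can argue by partial summation directly: setting $T(\alpha)=\int_{-\alpha}^{\alpha}\vert\Stilde_k(\beta)\vert^2\,\dx\beta$, which by Lemma~\ref{Tolev-Lemma} satisfies $T(\alpha)\ll_k(\alpha N^{1/k}+N^{2/k-1})L^3$, one has
\[
  \int_\tau^{1/2}\vert\Stilde_k(\alpha)\vert^2\,\frac{\dx\alpha}{\alpha}
  \le
  \int_\tau^{1/2}\frac{1}{\alpha}\,\dx\!\bigl(T(\alpha)/2\bigr)
  =
  \Bigl[\frac{T(\alpha)}{2\alpha}\Bigr]_\tau^{1/2}
  +
  \frac12\int_\tau^{1/2}\frac{T(\alpha)}{\alpha^2}\,\dx\alpha,
\]
and then insert the bound for $T$: the boundary term at $1/2$ is $\ll_k(N^{1/k}+N^{2/k-1})L^3$ and at $\tau$ is $\ll_k(N^{1/k}+\tau^{-1}N^{2/k-1})L^3$, while the integral splits as $\int_\tau^{1/2}\alpha^{-1}\,\dx\alpha\cdot N^{1/k}L^3\ll_k N^{1/k}L^3\log(1/\tau)\ll_k N^{1/k}L^4$ plus $\int_\tau^{1/2}\alpha^{-2}\,\dx\alpha\cdot N^{2/k-1}L^3\ll_k\tau^{-1}N^{2/k-1}L^3$. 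This reproduces the stated estimate.

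The only mildly delicate point is bookkeeping the logarithmic factors: one must use the hypothesis $\tau\ge N^{-c}$ precisely to convert $\log(1/\tau)$ into $\ll_k L$ (with an implied constant depending on $c$, hence allowable since $c$ is fixed), which is what turns the $L^3$ from Lemma~\ref{Tolev-Lemma} into the $L^4$ in the first term. There is no genuine obstacle here; the lemma is a routine partial-summation consequence of Lemma~\ref{Tolev-Lemma}, and either of the two presentations above works. I would write up the partial-summation version for brevity.
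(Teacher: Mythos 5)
Your proposal is correct and follows essentially the same route as the paper, which derives Lemma~\ref{weighted-L2} from Lemma~\ref{Tolev-Lemma} by ``a simple integration by parts'' --- exactly your second (partial-summation) variant, with the hypothesis $\tau \ge N^{-c}$ used, as you note, to bound $\log(1/\tau) \ll_c L$ and produce the $L^4$. The dyadic decomposition is just an equivalent repackaging of the same idea, so there is nothing further to add.
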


\begin{Lemma}[Lemma 3.6 of \cite{CantariniGZ2018}]
\label{mt-evaluation}
For $N \to +\infty$, $H \in [1, N]$ and a real number $\lambda$ we
have
\[
  \sum_{n = N + 1}^{N + H}
    \e^{- n / N} n^{\lambda}
  =
  \frac1{\e}
  H N^{\lambda}
  +
  \Odipm{\lambda}{H^2 N^{\lambda - 1}}.
\]
\end{Lemma}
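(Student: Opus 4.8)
The plan is to isolate the dominant behaviour by the substitution $n = N + j$, $1 \le j \le H$, which exploits the hypothesis $H \le N$ to keep the ratio $j / N$ confined to $(0, 1]$. Writing $\e^{-n/N} = \e^{-1} \e^{-j/N}$ and $n^\lambda = N^\lambda (1 + j/N)^\lambda$, each summand becomes
\[
  \e^{-n/N} n^\lambda
  =
  \e^{-1} N^\lambda \, g(j / N),
  \qquad
  g(x) := \e^{-x} (1 + x)^\lambda,
\]
so that the whole sum equals $\e^{-1} N^\lambda \sum_{j=1}^H g(j/N)$. The entire problem is thus reduced to understanding $g$ near the origin.

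The key step is a uniform first-order estimate for $g$. Since $0 < j / N \le 1$, the argument $x = j/N$ ranges over the fixed compact interval $(0, 1]$, on which $g$ is $C^1$ with $g(0) = 1$; by the mean value theorem one gets $g(x) = 1 + \Odip{\lambda}{x}$ uniformly, the implied constant being $\sup_{[0,1]} \vert g'\vert$, which depends only on $\lambda$. Substituting this and summing,
\[
  \sum_{n = N+1}^{N+H} \e^{-n/N} n^\lambda
  =
  \e^{-1} N^\lambda \sum_{j=1}^H \bigl( 1 + \Odip{\lambda}{j / N} \bigr)
  =
  \e^{-1} N^\lambda H
  +
  \Odip{\lambda}{N^{\lambda - 1} \sum_{j=1}^H j},
\]
and since $\sum_{j=1}^H j = H (H+1) / 2 \ll H^2$ the error term is $\Odip{\lambda}{H^2 N^{\lambda - 1}}$, which gives precisely the claimed formula with main term $\frac1{\e} H N^\lambda$.

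I do not expect any serious obstacle: the argument is elementary and the only point needing a little care is that the estimate $g(x) = 1 + \Odip{\lambda}{x}$ must be uniform in $j$ with an implied constant independent of $N$ and $H$. This uniformity is exactly what the restriction to the fixed interval $[0,1]$ provides, and that interval is available only because of the hypothesis $H \le N$; were $H$ allowed to exceed $N$, the factor $(1 + j/N)^\lambda$ could grow and the linear remainder bound would break down, so this hypothesis is essential to the clean error term.
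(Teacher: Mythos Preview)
Your argument is correct. The paper does not supply a proof of this lemma: it is quoted verbatim as Lemma~3.6 of \cite{CantariniGZ2018}, so there is nothing in the present paper to compare your approach against. Your substitution $n = N + j$ together with the uniform first-order expansion of $g(x) = \e^{-x}(1+x)^\lambda$ on the fixed interval $[0,1]$ is the natural elementary route, and your remark that the hypothesis $H \le N$ is precisely what confines $j/N$ to a compact set (and hence guarantees the uniformity of the $\Odipm{\lambda}{x}$ bound) is on point.
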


\section{Proof of Theorem \ref{ternary}}

We recall that $\K = (k_1, k_2, k_3)$ where $k_j \ge 2$ is an integer
and that $\rho = 1 / k_1 + 1 / k_2 + 1 / k_3$ is the density of our
problem.
We need to introduce another parameter $B = B(N)$, defined as
\begin{equation}
\label{def-B}
  B = N^{2 \eps},
\end{equation}
where $\eps > 0$ is fixed.
Ideally, we would like to take $B = 1$, but we are prevented from
doing this by the estimate in \S\ref{sub-I5}.
We let $\C = \C(B,H) = [-1/2, -B/H] \cup [B/H, 1/2]$.
We write $\Stilde_{k_j}(\alpha) = x_j + y_j$ where
$x_j = x_j(\alpha) = \gamma_j z^{-1/k_j}$ and
$y_j = y_j(\alpha) = \E_{k_j}(\alpha)$, so that
\begin{equation}
\label{identity}
  \Stilde_{k_1}(\alpha) \Stilde_{k_2}(\alpha) \Stilde_{k_3}(\alpha)
  =
  (x_1 + y_1) (x_2 + y_2) (x_3 + y_3)
  =
  x_1 x_2 x_3 + \gA - \gB - \gC,
\end{equation}
where
$\gA(\alpha) = y_1 \Stilde_{k_2}(\alpha) \Stilde_{k_3}(\alpha) +
             \Stilde_{k_1}(\alpha) y_2 \Stilde_{k_3}(\alpha) +
             \Stilde_{k_1}(\alpha) \Stilde_{k_2}(\alpha) y_3$,
$\gB(\alpha) = x_1 y_2 y_3 + y_1 x_2 y_3 + y_1 y_2 x_3$ and
$\gC(\alpha) = 2 y_1 y_2 y_3$.
We multiply \eqref{identity} by $U(-\alpha, H) \* \e(-N \alpha)$
and integrate over the interval $[-B / H, B / H]$.
Recalling \eqref{basic-identity} we have
\begin{align*}
  \sum_{n = N + 1}^{N + H}
    \e^{-n / N} R(n; \K)
  &=
  \gamma_{k_1} \gamma_{k_2} \gamma_{k_3}
  \int_{-B/H}^{B/H} \frac{U(-\alpha, H)}{z^{\rho}}
    \, \e(-N \alpha) \, \dx \alpha \\
  &\qquad+
  \int_{-B/H}^{B/H} \gA(\alpha) U(-\alpha, H) \e(-N \alpha) \, \dx \alpha \\
  &\qquad-
  \int_{-B/H}^{B/H} \gB(\alpha) U(-\alpha, H) \e(-N \alpha) \, \dx \alpha \\
  &\qquad-
  \int_{-B/H}^{B/H} \gC(\alpha) U(-\alpha, H) \e(-N \alpha) \, \dx \alpha \\
  &\qquad+
  \int_\C \Stilde_{k_1}(\alpha) \Stilde_{k_2}(\alpha)\Stilde_{k_3}(\alpha)
    U(-\alpha, H) \e(-N \alpha) \, \dx \alpha \\
  &=
  \gamma_{k_1} \gamma_{k_2} \gamma_{k_3} I_1
  +
  I_2 - I_3 - I_4 + I_5,
\end{align*}
say.
The first summand gives rise to the main term via
Lemma~\ref{Laplace-formula}, the next three are majorised in
\S\ref{sub-I2}--\ref{sub-I5} by means of Lemma~\ref{Stilde-bound} and
the $L^2$-estimate provided by Lemma~\ref{LP-Lemma-gen}.
Finally, $I_5$ is easy to bound using Lemma~\ref{weighted-L2}.

\subsection{Evaluation of \texorpdfstring{$I_1$}{I1}}

It is a straightforward application of Lemma~\ref{Laplace-formula}:
here we exploit the flexibility of having variable endpoints instead
of the full unit interval.
We have
\begin{equation}
\label{prep-mt}
  \int_{-B/H}^{B/H} \frac{U(-\alpha, H)}{z^{\rho}}
    \, \e(-N \alpha) \, \dx \alpha
  =
  \frac1{\Gamma(\rho)}
  \sum_{n = N + 1}^{N + H}
    \e^{- n / N} n^{\rho - 1}
  +
  \Odip{\K}{\frac HN \Bigl( \frac HB \Bigr)^{\rho}}.
\end{equation}
We evaluate the sum on the right-hand side of \eqref{prep-mt} by means
of Lemma~\ref{mt-evaluation} with $\lambda = \rho - 1$.
Summing up, we have
\begin{equation}
\label{final-mt}
  \int_{-B/H}^{B/H} \frac{U(-\alpha, H)}{z^{\rho}}
    \, \e(-N \alpha) \, \dx \alpha
  =
  \frac1{\e \Gamma(\rho)} H N^{\rho - 1}
  +
  \Odip{\K}{H^2 N^{\rho - 2} + \frac HN \Bigl( \frac HB \Bigr)^{\rho}}.
\end{equation}

It is now convenient to choose the range for $H$: keeping in mind that
will need Lemma~\ref{LP-Lemma-gen}, we see that we can take
\begin{equation}
\label{H-bound}
  H > N^{1 - 5 / (6 \max k_j) + 3 \eps}.
\end{equation}

\subsection{Bound for \texorpdfstring{$I_2$}{I2}}
\label{sub-I2}

We recall the bound~\eqref{U-bound}, and Lemmas~\ref{Stilde-bound} and
\ref{Tolev-Lemma}.
Using Lemma~\ref{LP-Lemma-gen} and the Cauchy-Schwarz inequality where
appropriate, we see that the contribution from
$\Stilde_{k_1}(\alpha) \* \Stilde_{k_2}(\alpha) y_3$, say, is
\begin{align}
\notag
  &\ll_{\K}
  H
  \max_{\alpha \in [-1/2, 1/2]} \vert \Stilde_{k_1}(\alpha) \vert
  \Bigl(
  \int_{-B / H}^{B / H} \vert \Stilde_{k_2}(\alpha) \vert^2 \, \dx \alpha
  \int_{-B / H}^{B / H} \vert \E_{k_3}(\alpha) \vert^2 \, \dx \alpha
  \Bigr)^{1 / 2} \\
\notag
  &\ll_{\K}
  H N^{1 / k_1} L^{3 / 2}
  \Bigl( \frac BH N^{1 / k_2} + N^{2 / k_2 - 1} \Bigr)^{1 / 2}
  \bigl( N^{2 / k_3 - 1} A(N; - c_1) \bigr)^{1 / 2} \\
\label{bound-I2}
  &\ll_{\K}
  H N^{\rho - 1} A \Bigl(N; - \frac13 c_1\Bigr),
\end{align}
where $c_1 = c_1(\eps) > 0$ is the constant provided by
Lemma~\ref{LP-Lemma-gen}, which we can use on the interval
$[-B / H, B / H]$ since $B$ and $H$ satisfy \eqref{def-B} and
\eqref{H-bound} respectively.
The other two summands in $I_2$ are treated in the same way.

\subsection{Bounds for \texorpdfstring{$I_3$}{I3} and \texorpdfstring{$I_4$}{I4}}

Using \eqref{z-bound}, \eqref{U-bound} and Lemma~\ref{LP-Lemma-gen},
by the Cauchy-Schwarz inequality, we see that the contribution
from the term $y_1 y_2 x_3$ is
\begin{align}
\notag
  &=
  \gamma_{k_3}
  \int_{-B/H}^{B/H}
    \frac{\E_{k_1}(\alpha) \E_{k_2}(\alpha)}{z^{1/k_3}}
    U(-\alpha, H) \e(-N \alpha) \, \dx \alpha \\
\notag
  &\ll_{\K}
  H N^{1 / k_3}
  \Bigl(
    \int_{-B/H}^{B/H} \vert \E_{k_1}(\alpha) \vert^2 \, \dx \alpha
    \int_{-B/H}^{B/H} \vert \E_{k_2}(\alpha) \vert^2 \, \dx \alpha
  \Bigr)^{1/2} \\
\label{bound-I3}
  &\ll_{\K}
  H N^{\rho - 1} A(N; -c_1).
\end{align}
The other two summands in $I_3$ are treated in the same way.
Furthermore, we notice that $y_3 \ll_{k_3} N^{1 / k_3}$ by
\eqref{y-bound}, and the contribution from $\gC(\alpha)$ is also
bounded as in \eqref{bound-I3}.

\subsection{Bound for \texorpdfstring{$I_5$}{I5}}
\label{sub-I5}

Using \eqref{U-bound}, Lemma~\ref{weighted-L2} and the Cauchy-Schwarz
inequality, we have
\begin{align}
\notag
  I_5
  &=
  \int_\C \Stilde_{k_1}(\alpha) \Stilde_{k_2}(\alpha) \Stilde_{k_3}(\alpha)
    U(-\alpha, H) \e(-N \alpha) \, \dx \alpha \\
\notag
  &\ll_{\K}
  \max_{\alpha \in [-1/2, 1/2]} \vert \Stilde_{k_1}(\alpha) \vert
  \Bigl(
    \int_\C \vert \Stilde_{k_2}(\alpha) \vert^2
      \, \frac{\dx \alpha}{\vert \alpha \vert}
    \int_\C \vert \Stilde_{k_3}(\alpha) \vert^2
      \, \frac{\dx \alpha}{\vert \alpha \vert}
  \Bigr)^{1/2} \\
\label{bound-I5}
  &\ll_{\K}
  N^{1 / k_1}
  \Bigl( \frac{H^2}{B^2} N^{2 / k_2 + 2 / k_3 - 2} L^6 \Bigr)^{1/2}
  \ll_{\K}
  \frac HB N^{\rho - 1} L^3,
\end{align}
because of \eqref{H-bound}.
This is $\ll_{\K} H N^{\rho - 1} A(N; -c_1 / 3)$, by our choice
in~\eqref{def-B}.

\subsection{Completion of the proof}
\label{final-Th1}

For simplicity, from now on we assume that $H \le N^{1 - \eps}$.
Summing up from \eqref{final-mt}, \eqref{bound-I2}, \eqref{bound-I3}
and \eqref{bound-I5}, we proved that
\begin{equation}
\label{smooth-Th1}
  \sum_{n = N + 1}^{N + H}
    \e^{-n / N} R(n; \K)
  =
  \frac{\gamma_{k_1} \gamma_{k_2} \gamma_{k_3}}{\e \Gamma(\rho)}
  H N^{\rho - 1}
  +
  \Odip{\K}{H N^{\rho - 1} A \Bigl(N; - \frac13 c_1\Bigr)},
\end{equation}
provided that \eqref{def-B} and \eqref{H-bound} hold, since the other
error terms are smaller in our range for $H$.
In order to achieve the proof, we have to remove the exponential
factor on the left-hand side, exploiting the fact that, since $H$ is
``small,'' it does not vary too much over the summation range.
Since $\e^{-n / N} \in [\e^{-2}, \e^{-1}]$ for all
$n \in [N + 1, N + H]$,
we can easily deduce from \eqref{smooth-Th1} that
\[
  \e^{-2}
  \sum_{n = N + 1}^{N + H} R(n; \K)
  \le
  \sum_{n = N + 1}^{N + H}
    \e^{-n / N} R(n; \K)
  \ll_{\K}
  H N^{\rho - 1}.
\]
We can use this weak upper bound to majorise the error term arising
from the development $\e^{-x} = 1 + \Odi{x}$ that we need in the
left-hand side of \eqref{smooth-Th1}.
In fact, we have
\begin{align*}
  \sum_{n = N + 1}^{N + H}
    \e^{-n / N} R(n; \K)
  &=
  \sum_{n = N + 1}^{N + H}
    \bigl(\e^{-1} + \Odi{(n - N) N^{-1}} \bigr) R(n; \K) \\
  &=
  \e^{-1}
  \sum_{n = N + 1}^{N + H} R(n; \K)
  +
  \Odip{\K}{ H^2 N^{\rho - 2} }.
\end{align*}
Finally, substituting back into \eqref{smooth-Th1}, we obtain the
required asymptotic formula for $H$ as in the statement of
Theorem~\ref{ternary}.

\section{Proof of Theorem \ref{ternary-RH}}

In the conditional case, we can use identity~\eqref{identity} over the
whole interval $[-1/2, 1/2]$.
Recalling \eqref{basic-identity} we have
\begin{align*}
  \sum_{n = N + 1}^{N + H}
    \e^{-n / N} R(n; \K)
  &=
  \gamma_{k_1} \gamma_{k_2} \gamma_{k_3}
  \int_{-1 / 2}^{1 / 2} \frac{U(-\alpha, H)}{z^{\rho}}
    \, \e(-N \alpha) \, \dx \alpha \\
  &\qquad+
  \int_{-1 / 2}^{1 / 2} \gA(\alpha) U(-\alpha, H) \e(-N \alpha) \, \dx \alpha \\
  &\qquad-
  \int_{-1 / 2}^{1 / 2} \gB(\alpha) U(-\alpha, H) \e(-N \alpha) \, \dx \alpha \\
  &\qquad-
  \int_{-1 / 2}^{1 / 2} \gC(\alpha) U(-\alpha, H) \e(-N \alpha) \, \dx \alpha \\
  &=
  \gamma_{k_1} \gamma_{k_2} \gamma_{k_3} I_1
  +
  I_2 - I_3 - I_4,
\end{align*}
say.
For the main term we use Lemma~\ref{Laplace-formula} over $[-1/2, 1/2]$
and then Lemma~\ref{mt-evaluation} with $\lambda = \rho - 1$, obtaining
\begin{equation}
\label{final-mt-rh}
  \int_{-1 / 2}^{1 / 2} \frac{U(-\alpha, H)}{z^{\rho}}
    \, \e(-N \alpha) \, \dx \alpha
  =
  \frac1{\e \Gamma(\rho)} H N^{\rho - 1}
  +
  \Odip{\K}{H^2 N^{\rho - 2} + \frac HN}.
\end{equation}
For the other terms, we split the integration range at $1 / H$.
We use Lemma~\ref{LP-Lemma-gen} and \eqref{U-bound} on the interval
$[-1 / H, 1 / H]$,  and a partial-integration argument
from Lemma~\ref{LP-Lemma-gen} in the remaining range.

In view of future constraints (see \eqref{bound-H-RH} below) we assume
that
\begin{equation}
\label{first-bound-H}
  H
  \ge
  N^{1 - 1 / k_3} L.
\end{equation}
We start bounding the contribution of the term
$\Stilde_{k_1}(\alpha) \* \Stilde_{k_2}(\alpha) y_3$ in $\gA(\alpha)$
over $[-1 / H, 1 / H]$.
We have that it is
\begin{align*}
  &\ll_{\K}
  H
  \max_{\alpha \in [-1/2, 1/2]} \vert \Stilde_{k_1}(\alpha) \vert
  \Bigl(
  \int_{-1 / H}^{1 / H} \vert \Stilde_{k_2}(\alpha) \vert^2 \, \dx \alpha
  \int_{-1 / H}^{1 / H} \vert \E_{k_3}(\alpha) \vert^2 \, \dx \alpha
  \Bigr)^{1 / 2} \\
  &\ll_{\K}
  H N^{1 / k_1} L^{3 / 2}
  \Bigl( \frac 1H N^{1 / k_2} + N^{2 / k_2 - 1} \Bigr)^{1 / 2}
  \bigl( N^{1 / k_3} H^{-1} L^2 \bigr)^{1 / 2} \\
  &\ll_{\K}
  H^{1 / 2} N^{\rho - 1/2 - 1 / (2 k_3)} L^{5/2},
\end{align*}
by Lemma~\ref{LP-Lemma-gen}, since we assumed \eqref{first-bound-H}.
The other two summands in $I_2$ are treated in the same way.
Next, we bound the contribution of the term $x_1 y_2 y_3$ in
$\gB(\alpha)$ on the same interval: it is
\begin{align}
\notag
  &=
  \gamma_{k_1}
  \int_{-1/H}^{1/H}
    \frac{\E_{k_2}(\alpha) \E_{k_3}(\alpha)}{z^{1/k_1}}
    U(-\alpha, H) \e(-N \alpha) \, \dx \alpha \\
\notag
  &\ll_{\K}
  H N^{1 / k_1}
  \Bigl(
    \int_{-1/H}^{1/H} \vert \E_{k_2}(\alpha) \vert^2 \, \dx \alpha
    \int_{-1/H}^{1/H} \vert \E_{k_3}(\alpha) \vert^2 \, \dx \alpha
  \Bigr)^{1/2} \\
\notag
  &\ll_{\K}
  H N^{1 / k_1}
  \Bigl( N^{1 / k_2 + 1 / k_3} \frac 1{H^2} L^4 \Bigr)^{1/2}
  \ll_{\K}
  N^{\rho - 1 / (2 k_2) - 1 / (2 k_3)} L^2.
\end{align}
The other two summands in $I_3$ are treated in the same way.
Furthermore, we recall that $\E_k(\alpha) \ll_k N^{1 / k}$ by
\eqref{y-bound}, and the contribution from $\gC(\alpha)$ can also be
bounded as above.

We now deal with  the remaining range $\C = [-1/2,1/2] \setminus [-1/H,1/H]$.
Arguing as in (16) of \cite{CantariniGZ2018} by partial integration
from Lemma~\ref{LP-Lemma-gen}, for $k > 1$ we have
\[
  \int_{\C}
    \bigl\vert \E_k(\alpha) \bigr\vert^2
    \, \frac{\dx \alpha}{\vert \alpha \vert}
  \ll_k
  N^{1 / k} L^3.
\]
Proceeding as above, we start bounding the contribution of the term
$\Stilde_{k_1}(\alpha) \* \Stilde_{k_2}(\alpha) y_3$ in $\gA(\alpha)$.
Using \eqref{U-bound} and Lemma~\ref{weighted-L2} we see that it is
\begin{align*}
  &\ll_{\K}
  \max_{\alpha \in [-1/2, 1/2]} \vert \Stilde_{k_1}(\alpha) \vert
  \Bigl(
  \int_{\C}
    \vert \Stilde_{k_2}(\alpha) \vert^2 \, \frac{\dx \alpha}{\vert \alpha \vert}
  \int_{\C}
    \vert \E_{k_3}(\alpha) \vert^2 \, \frac{\dx \alpha}{\vert \alpha \vert}
  \Bigr)^{1 / 2} \\
  &\ll_{\K}
  N^{1 / k_1}
  \Bigl( N^{1 / k_2} L^4 + H N^{(2 - k_2) / k_2} L^3 \Bigr)^{1 / 2}
  \bigl( N^{1 / k_3} L^3 \bigr)^{1 / 2} \\
  &\ll_{\K}
  H^{1 / 2} N^{\rho - 1/2 - 1 / (2 k_3)} L^3,
\end{align*}
since we assumed \eqref{first-bound-H}.
The other two summands in $I_2$ are treated in the same way.
Next, we bound the contribution of the term $x_1 y_2 y_3$ in
$\gB(\alpha)$ on the same interval: using \eqref{U-bound} again, it is
\begin{align}
\notag
  &\ll_{\K}
  \int_{\C}
    \frac{\E_{k_2}(\alpha) \E_{k_3}(\alpha)}{\vert z \vert^{1 / k_1}}
    \, \frac{\dx \alpha}{\vert \alpha \vert} \\
\notag
  &\ll_{\K}
  N^{1 / k_1}
  \Bigl(
    \int_{\C}
      \vert \E_{k_2}(\alpha) \vert^2 \, \frac{\dx \alpha}{\vert \alpha \vert}
    \int_{\C}
      \vert \E_{k_3}(\alpha) \vert^2 \, \frac{\dx \alpha}{\vert \alpha \vert}
  \Bigr)^{1/2} \\
\notag
  &\ll_{\K}
  N^{1 / k_1}
  \bigl(
    N^{1 / k_2 + 1 / k_3} L^6
  \bigr)^{1/2}
  \ll_{\K}
  N^{\rho - 1 / (2 k_2) - 1 / (2 k_3)} L^3.
\end{align}
The other two summands in $I_3$ are treated in the same way.
The contribution from $\gC(\alpha)$ can also be bounded as above.

Summing up from~\eqref{final-mt-rh}, recalling that
$2 \le k_1 \le k_2 \le k_3$, we proved that
\[
  \sum_{n = N + 1}^{N + H}
    \e^{-n / N} R(n; \K)
  =
  \frac1{\e \Gamma(\rho)} H N^{\rho - 1}
  +
  \Odip{\K}{\Psi_{\K}(N, H)},
\]
where
\[
  \Psi_{\K}(N, H)
  =
  H^2 N^{\rho - 2}
  +
  H^{1 / 2} N^{\rho - 1/2 - 1 / (2 k_3)} L^3
  +
  N^{\rho - 1 / (2 k_2) - 1 / (2 k_3)} L^3.
\]
We dropped the term $H N^{-1}$ which is smaller than $H^2 N^{\rho - 2}$
because of \eqref{first-bound-H}.
Since we want an asymptotic formula, we need to impose the restriction
\begin{equation}
\label{bound-H-RH}
  H = \infty \bigl( N^{1 - 1 / k_3} L^6 \bigr),
\end{equation}
which supersedes \eqref{first-bound-H}.
Therefore, we may take
\begin{equation}
\label{def-Phi}
  \Phi_{\K}(N, H)
  =
  H^2 N^{\rho - 2}
  +
  H^{1 / 2} N^{\rho - 1/2 - 1 / (2 k_3)} L^3.
\end{equation}
We remark that when $k_1 = 2$ we can use Lemma~2
of~\cite{LanguascoZ2016c} instead of Lemma~\ref{Tolev-Lemma} in the
partial integration in the proof of Lemma~\ref{weighted-L2}, and we
can replace the right-hand side by $N^{1 / 2} L^2 + H L^2$.
This means, in particular, that, in this case, we may replace $L^3$
in the far right of \eqref{def-Phi} by $L^{5 / 2}$.

Next, we remove the exponential weight, arguing essentially as in
\S\ref{final-Th1}.
This completes the proof of Theorem~\ref{ternary-RH}.

\subsubsection*{Acknowledgment}
The first Author gratefully acknowledges support from a grant
``Ing.~Giorgio Schirillo'' from Istituto Nazionale di Alta Matematica.

\providecommand{\MR}{\relax\ifhmode\unskip\space\fi MR }
\providecommand{\MRhref}[2]{%
  \href{http://www.ams.org/mathscinet-getitem?mr=#1}{#2}
}
\providecommand{\href}[2]{#2}

\bigskip

\begin{tabular}{ll}
MC, AG, AZ                            & AL \\
Dipartimento di Scienze, Matematiche, & Dipartimento di Matematica \\
\quad Fisiche e Informatiche          & \quad ``Tullio Levi-Civita'' \\
Universit\`a di Parma                 & Universit\`a di Padova \\
Parco Area delle Scienze 53/a         & Via Trieste 63 \\
43124 Parma, Italia                   & 35121 Padova, Italia \\
\\
email (MC): \texttt{cantarini\_m@libero.it} \\
email (AG): \texttt{a.gambini@unibo.it} \\
email (AL): \texttt{alessandro.languasco@unipd.it} \\
email (AZ): \texttt{alessandro.zaccagnini@unipr.it}
\end{tabular}

\end{document}